\def\CC{{\mathbb C}}
\def\QQ{{\mathbb Q}}
\def\QQ{{\mathbb Q}}
\def\RR{{\mathbb R}}
\def\ZZ{{\mathbb Z}}
\def\0{{\mathbf 0}}
\def\1{{\mathbf 1}}
\def\Mcal{{\mathcal M}}
\def\Ocal{{\mathcal O}}
\def\Qbar{\overline{\QQ}}
\def\PCF{\mathrm{PCF}}
\def\disc{\mathrm{disc}}
\def\Gal{\mathrm{Gal}}
\def\Res{\mathrm{Res}}
\def\tr{\mathrm{tr}}
\theoremstyle{plain}
\newtheorem{thm}{Theorem}
\newtheorem{cor}[thm]{Corollary}
\newtheorem{lem}[thm]{Lemma}
\theoremstyle{definition}
\title[Totally real algebraic integers and unicritical families]{Totally real algebraic integers in short intervals, Jacobi polynomials, and unicritical families in arithmetic dynamics}
\author{Chatchai Noytaptim}
\address{Chatchai Noytaptim; Department of Mathematics; Oregon State University; Corvallis OR 97331 U.S.A.}
\email{noytaptc@oregonstate.edu}
\author{Clayton Petsche}
\address{Clayton Petsche; Department of Mathematics; Oregon State University; Corvallis OR 97331 U.S.A.}
\email{petschec@math.oregonstate.edu}
\date{November 12, 2022}
\begin{document}

\begin{abstract}
We classify all post-critically finite unicritical polynomials defined over the maximal totally real algebraic extension of $\QQ$.  Two auxiliary results used in the proof of this result may be of some independent interest.  The first is a recursion formula for the $n$-diameter of an interval, which uses properties of Jacobi polynomials.  The second is a numerical criterion which allows one to the give a bound on the degree of any algebraic integer having all of its complex embeddings in a real interval of length  less than $4$.  
\end{abstract}

\maketitle


\section{Introduction}

Let $d\geq2$ be an integer, let $c\in\Qbar$, and consider the polynomial map $f:\Qbar\to\Qbar$ defined by $f(x)=x^d+c$.  Define $f^n:\Qbar\to\Qbar$ to be the $n$-fold composition 
\[
f^n=f\circ f \circ\dots\circ f
\] 
of $f$ with itself.  The polynomial $f(x)$ is said to be post-critically finite, or PCF, if its unique critical point $0$ is preperiodic with respect to iteration; in other words, if the critical orbit $0,f(0),f^2(0),\dots$ takes only finitely many distinct values. It has been well established in complex and arithmetic dynamics that PCF maps are a class well worthy of study, as they often possess dynamical properties which are highly distinguishable from arbitrary maps.  For convenience, we define
\[
\PCF_d=\{c\in\Qbar\mid f(x)=x^d+c\text{ is PCF}\}
\]
to be the set of all PCF parameters $c$ with respect to the degree $d$ unicritical family $x^d+c$.

Given an algebraic number $c\in\Qbar$ of degree $n=[\QQ(c):\QQ]$, let $F_c(X)\in\QQ[X]$ be its minimal polynomial over $\QQ$.  Recall that $c$ is said to be totally real if $F_c(X)$ has $n$ real roots.  For each $d\geq2$, one might ask how often a unicritical polynomial $x^d+c$ can be both PCF and defined over $\QQ^\tr$.  In other words, what can we say about the intersection $\PCF_d\cap\QQ^\tr$, and in particular, is this intersection finite?  Although $\PCF_d$ is a set of bounded height, the field $\QQ^\tr$ is an infinite degree extension of $\QQ$, and therefore such a finiteness result does not follow directly from the Northcott property of heights.

In fact, the finiteness result
\begin{equation}\label{FinitenessIntro}
|\PCF_d\cap\QQ^\tr|<+\infty
\end{equation}
for each $d\geq2$ is not difficult to obtain using a theorem of Fekete in arithmetic capacity theory.  First one observes that the generalized Mandelbrot set 
\[
\Mcal_d=\{c\in\CC\mid \text{the forward orbit of $0$ under $x^d+c$ is bounded}\}
\]
meets the real line at an interval of length $<4$; in the case $d=2$ this follows from the well-known fact that the ordinary Mandelbrot set $\Mcal_2$ meets the real line at the interval $[-2,1/4]$, and in the case $d\geq3$ the comparable calculation has been done by Paris\'e-Rochon \cite{MR3654403} and Paris\'e-Ransford-Rochon \cite{PariseRansfordRochon}.  The second relevant observation is that the set $\PCF_d$ consists entirely of algebraic integers.  Consequently, $\PCF_d\cap\QQ^\tr$ is confined to an adelic set of arithmetic capacity $<1$, and hence must be finite.  (A standard source for arithmetic capacity theory is Rumely \cite{MR1009368}.)  We give more of the details in the proof of Theorem~\ref{FinitenessTheorem}.

With more care we can prove the following explicit result.

\begin{thm}\label{MainDynamicalTheorem}
It holds that 
\begin{equation*}
\begin{split}
\PCF_2\cap\QQ^\tr & = \{-2,-1,0\} \\
\PCF_d\cap\QQ^\tr & =\{-1,0\} \text{ if $d\geq4$ is even}\\
\PCF_d\cap\QQ^\tr & =\{0\} \text{ if $d\geq3$ is odd}.
\end{split}
\end{equation*}  
\end{thm}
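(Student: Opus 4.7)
The plan is to reduce to the hypothesis of the second auxiliary result and then run a finite case analysis.  Let $c\in\PCF_d\cap\QQ^\tr$.  First, $c$ is an algebraic integer: an equation $f^m(0)=f^{m+k}(0)$ witnessing preperiodicity of the critical orbit is a monic polynomial identity in $c$ with integer coefficients, since each iterate $f^j(0)$ is a monic polynomial in $c$ over $\ZZ$ of degree $d^{j-1}$.  Second, the PCF property is preserved under Galois conjugation of the parameter: for $\sigma\in\Gal(\Qbar/\QQ)$ one has $\sigma(f^n(0))=f_{\sigma(c)}^n(0)$ where $f_{\sigma(c)}(x)=x^d+\sigma(c)$, so every Galois conjugate $c'$ of $c$ is again a PCF parameter, and since $c$ is totally real, each $c'$ lies in the real slice $\Mcal_d\cap\RR$.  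By the calculations cited from Paris\'e-Rochon and Paris\'e-Ransford-Rochon, this slice is an interval $I_d\subset\RR$ of length strictly less than $4$; hence every conjugate of $c$ lies in $I_d$.

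I would then feed $I_d$ into the numerical criterion to obtain an explicit upper bound $N_d$ on $n=[\QQ(c):\QQ]$.  For each $n\leq N_d$, the set of monic polynomials in $\ZZ[X]$ of degree $n$ with all roots in $I_d$ is finite, because the elementary symmetric functions of $n$ real numbers confined to a bounded interval are themselves bounded in terms of $n$ and the endpoints of $I_d$.  For each such candidate I would pick a real root $c_0\in I_d$ and test whether the critical orbit of $x^d+c_0$ is preperiodic by a finite computation inside the number field $\QQ(c_0)$.

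To finish I would verify that the surviving candidates match the claimed lists.  The parameter $c=0$ is manifestly PCF for every $d$, since the critical orbit is $\{0\}$.  For $c=-1$ one has $f(-1)=(-1)^d-1$, which equals $0$ when $d$ is even (so $0\to-1\to 0$ is a $2$-cycle) and equals $-2$ when $d$ is odd, after which the orbit escapes to infinity for $d\geq 3$.  For $c=-2$ one computes $0\to-2\to 2^d-2$, which equals $2$ exactly when $d=2$ (and $2$ is then a fixed point), and otherwise escapes for $d\geq 3$.  All remaining candidates produced by the enumeration must be ruled out case by case.

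The main obstacle I anticipate is controlling the size of $N_d$: a crude capacity estimate would leave $n$ impractically large, since the transfinite diameter of $I_d$ is only modestly smaller than $1$.  This is precisely where the Jacobi polynomial recursion for the $n$-diameter of an interval (the first auxiliary result) is doing the real work, refining the capacity bound into a value small enough that the enumeration at each admissible degree is genuinely short.  I would expect the sharpest analysis in the case $d=2$, where $I_2=[-2,1/4]$ has the largest length $9/4$ and therefore the largest transfinite diameter, and correspondingly the largest permissible degrees; the odd and even $d\geq 3$ cases should be comparatively easier because $I_d$ is shorter.
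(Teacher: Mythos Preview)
Your proposal is correct and follows essentially the same strategy as the paper: reduce via integrality and Galois-invariance of the PCF condition to the situation where all conjugates of $c$ lie in $I_d=\Mcal_d\cap\RR$, then apply the numerical criterion (Theorem~\ref{TotRealIntervalTheorem}) to bound the degree and finish by a finite check. The paper's execution is a bit sharper than the generic enumeration you outline: for $d=2$ it verifies that $n_0=3$ works in Theorem~\ref{TotRealIntervalTheorem} and eliminates degree~$2$ by a short discriminant argument modulo~$4$, while for $d\geq3$ it proves $|I_d|<\sqrt{5}$ and invokes Corollary~\ref{TotRealIntervalCor} to conclude $c\in\ZZ$ immediately, so no enumeration of higher-degree integer polynomials or PCF testing inside number fields is ever actually required.
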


Thus the only quadratic PCF maps defined over $\QQ^\tr$ in the unicritical family $x^2+c$ are the squaring map $x^2$, the basilica map $x^2-1$, and the Chebyshev map $x^2-2$.  In the families $x^d+c$ for $d\geq3$, we find only the powering maps $x^d$, as well as $x^d-1$ when $d\geq4$ is even.

Our approach is to turn the capacity-theoretic finiteness result $|\PCF_d\cap\QQ^\tr|<+\infty$ into a quantitative upper bound on the degree of any totally real PCF parameter.  The primary tool is the {\em $n$-diameter}
\begin{equation}\label{nDiamDefinition}
d_n(E):=\sup_{x_1,x_2,\dots,x_n\in E}\,\,\,\prod_{i<j}|x_i-x_j|^{2/n(n-1)}
\end{equation}
associated to a compact subset $E$ of $\CC$ and each integer $n\geq2$.  Thus $d_n(E)$ is the maximal possible value of the geometric mean of the pairwise differences among any choice of $n$ points in $E$.  A collection $x_1,x_2,\dots,x_n$ of $n$ points in $E$ on which the supremum is achieved  are called {\em Fekete points} for $E$; such points are guaranteed to exist by the compactness of $E$. Elementary arguments show that the $n$-diameter has the transformation property $d_n(\alpha E+\beta)=|\alpha|d_n(E)$ for $\alpha,\beta\in \CC$, and the monotonicity property $d_n(E_1)\leq d_n(E_2)$ whenever $E_1\subseteq E_2$.

The sequence $\{d_n(E)\}$ is monotone decreasing, and therefore the limit
\[
d_\infty(E):=\lim_{n\to+\infty}d_n(E)
\] 
exists.  The quantity $d_\infty(E)$ is known as the {\em transfinite diameter} of $E$, and it also coincides with the capacity of $E$; see \cite{MR1334766} $\S$ 5.5. 

We recall the well-known calculation that the transfinite diameter of a real interval $[a,b]$ is a quarter of its length, that is $d_\infty([a,b])=\frac{b-a}{4}$; see \cite{MR1334766} Cor. 5.2.4.  In $\S$~\ref{nDiamSection} we prove the following explicit recursion formula for the $n$-diameter of an interval; this result may be of some independent interest.

\begin{thm}\label{nDiamIntervalThm}
For $n\geq2$, the $n$-diameter of a real interval $[\alpha,\beta]$ is given by
\begin{equation}\label{nDiamInterval}
d_n([\alpha,\beta]) = (\beta-\alpha)D_n^{1/n(n-1)},
\end{equation}
where $\{D_n\}_{n=2}^{\infty}$ is the sequence defined recursively by $D_2=1$ and 
\begin{equation*}
D_n  =  \frac{n^n(n-2)^{n-2}}{2^{2n-2}(2n-3)^{2n-3}}D_{n-1} \hskip1cm (n\geq3).
\end{equation*}
\end{thm}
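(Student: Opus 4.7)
The plan is to exploit the classical identification of Fekete points on an interval with the Gauss--Lobatto nodes, governed by Jacobi polynomials, and then to compare discriminants at consecutive values of $n$. By the scaling property $d_n(\alpha E + \beta) = |\alpha| d_n(E)$, it suffices to prove the theorem on $[-1, 1]$, where it reduces to showing that $V_n := \max_{x_i \in [-1,1]} \prod_{i<j}(x_i - x_j)^2$ satisfies $V_n/V_{n-1} = n^n(n-2)^{n-2}/(2n-3)^{2n-3}$, with $V_2 = 4$.

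The first step is to identify the extremal polynomial. Let $x_1 < \cdots < x_n$ be an extremal configuration. Differentiating $\log \prod_{i<j}|x_i - x_j|$ with respect to each $x_k$ shows that $x_1 = -1$ and $x_n = 1$ (the endpoint derivatives have definite sign), while each interior point satisfies the equilibrium condition $\sum_{j \neq i} (x_i - x_j)^{-1} = 0$. Via the identity $T''(x_i) = 2 T'(x_i) \sum_{j \neq i}(x_i - x_j)^{-1}$ for the Fekete polynomial $T(x) = \prod_k (x - x_k)$, this forces $T''(x_i) = 0$ at every interior Fekete node. Writing $T(x) = (x^2-1) Q(x)$ with $Q$ monic of degree $n-2$, the $n-2$ interior roots of $T''$ and $Q$ coincide, so matching leading coefficients gives $T''(x) = n(n-1) Q(x)$. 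Expanding the left-hand side from the factorization and simplifying yields the Jacobi differential equation
\[
(1 - x^2) Q''(x) - 4x Q'(x) + (n-2)(n+1) Q(x) = 0,
\]
which, up to normalization, identifies $Q$ as the Jacobi polynomial $P_{n-2}^{(1,1)}$.

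Next, I will evaluate $V_n = \disc(T)$ via the multiplicativity of the discriminant under polynomial products:
\[
V_n = \disc\bigl((x^2-1) Q(x)\bigr) = 4 \cdot \disc(Q) \cdot \bigl(Q(1) Q(-1)\bigr)^2.
\]
Substituting the classical evaluations $P_n^{(1,1)}(1) = n+1$ and $P_n^{(1,1)}(-1) = (-1)^n (n+1)$, and using the Stieltjes--Hilbert closed-form for $\disc(P_n^{(1,1)})$ together with the leading coefficient $2^{-(n-2)} \binom{2n-2}{n-2}$ of $P_{n-2}^{(1,1)}$ to rescale to monic $Q$, gives an explicit product expression for $V_n$.

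The final step is to form the ratio $V_n/V_{n-1}$ and simplify. I expect this bookkeeping to be the main obstacle: the Jacobi discriminant formula is a product of $O(n)$ index-dependent factors whose exponents shift linearly with both the summation index and $n$, and substantial cancellations, together with the corresponding ratio of leading-coefficient powers, must be carefully tracked so that the quotient collapses to the compact form $n^n (n-2)^{n-2}/(2n-3)^{2n-3}$. I would organize this by taking logarithms and matching index shifts term by term, and verify the result against the small cases $n=3$ (Fekete points $-1, 0, 1$, giving $V_3 = 4$) and $n=4$ (Fekete points $\pm 1, \pm 1/\sqrt{5}$, giving $V_4 = 4096/3125$).
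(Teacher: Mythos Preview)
Your proposal is correct and follows the same overall architecture as the paper: reduce to $[-1,1]$ by scaling, identify the Fekete configuration via the electrostatic argument as the zeros of $(x^2-1)P_{n-2}^{(1,1)}(x)$, express $V_n$ through the discriminant product formula $\disc((x^2-1)Q)=4\,\disc(Q)\,Q(1)^2Q(-1)^2$, and then compute the ratio $V_n/V_{n-1}$.

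The one substantive difference is in how the discriminant ratio for the Jacobi piece is obtained. You plan to invoke the Stieltjes--Hilbert closed form for $\disc(P_m^{(1,1)})$, rescale by the leading-coefficient power to pass to the monic normalization, and then collapse the quotient of two such products. The paper instead derives the recursion $|\disc(P_m)|/|\disc(P_{m-1})|=m^m(m+2)^{m-2}/(2m+1)^{2m-3}$ from scratch by Schur's resultant method: it sets $\Delta_m=|\Res(P_m,P_{m-1})|$, uses the three-term recurrence $P_m=xP_{m-1}-C_mP_{m-2}$ to get $\Delta_m=C_m^{m-1}\Delta_{m-1}$, and combines this with the derivative identity $(1-\alpha^2)P_m'(\alpha)=N_mP_{m-1}(\alpha)$ at the zeros. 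Your route is shorter if one is willing to cite the closed form, but the bookkeeping you flag (tracking index shifts and the leading-coefficient rescaling simultaneously) is exactly where errors creep in; the paper's Schur approach avoids ever writing down the full product and keeps everything at the level of ratios, which is arguably cleaner for this purpose. Your derivation of the Jacobi ODE via $T''=n(n-1)Q$ is a pleasant variant of the paper's more direct logarithmic-derivative computation on $Q$ itself.
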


This result can be easily converted into a closed form expression for $d_n([\alpha,\beta])$, but such a formula would not as useful for our purposes as the recursion formula described in Theorem~\ref{nDiamIntervalThm}.  For later use we give the first few values of $\{D_n\}$ here:
\[
D_2 = 1 \hskip5mm D_3 = \frac{1}{16} \hskip5mm D_4 = \frac{1}{3125} \hskip5mm D_5=\frac{27}{210827008}.
\]

It is immediately evident from the definition (\ref{nDiamDefinition}) that
\begin{equation}\label{nDiamAndDisc}
d_n(E)^{n(n-1)} = \sup_{f}|\disc(f)|
\end{equation}
where the supremum is taken over all monic polynomials $f(X)\in\CC[X]$ of degree $n$ having all $n$ roots in the set $E$.  This fact, together with the ``electrostatic interpretation'' of the zeros of Jacobi polynomials (an idea going back at least to Szeg\H{o} \cite{MR0372517}) is the main idea behind the proof of Theorem~\ref{nDiamIntervalThm}.  The $n$-Fekete points associated to the interval $[-1,1]$ are the endpoints together with the roots of the Jacobi polynomial of degree $n-2$ and weight $(1,1)$.  Like all orthogonal polynomials, the Jacobi polynomials satisfy a recursion as well as a method (due to Schur) to calculate their discriminant; we provide the details in $\S$~\ref{nDiamSection}.

In order to apply Theorem~\ref{nDiamIntervalThm} to the proof of Theorem~\ref{MainDynamicalTheorem}, we consider more generally the following problem:

\begin{quote}
Given a real interval $[\alpha,\beta]$ of length $L=\beta-\alpha<4$, find a bound $n_0$, depending on the length $L$, such that if $\theta$ is an algebraic integer, all of whose embeddings into $\CC$ lie in the real interval $[\alpha,\beta]$, then $[\QQ(\theta):\QQ]< n_0$.
\end{quote}

The requirement that $L<4$ is necessary because if $\zeta$ is a root of unity, then all complex embeddings of $\zeta+\zeta^{-1}$ lie in the real interval $[-2,2]$, thus no such $n_0$ exists for this interval.  On the other hand, it follows from general principles in arithmetic capacity theory that such a bound $n_0$ does exist when $L<4$, but finding a bound written as an explicit expression in $L$ seems to be difficult.  Instead, in Theorem~\ref{TotRealIntervalTheorem} we give a numerical criterion which has practically the same effect when applied in specific examples.  This result allows one to fairly easily calculate such a bound $n_0$ for any interval of some given particular length $L<4$.  

The main idea behind Theorem~\ref{TotRealIntervalTheorem} is the following.  Suppose that $\theta$ is an algebraic integer of degree $n=[\QQ(\theta):\QQ]$, such that all $n$ embeddings of $\theta$ into $\CC$ lie in the real interval $[\alpha,\beta]$.  Then Theorem~\ref{nDiamIntervalThm} provides an explicit upper bound on $|\disc(F_\theta)|$, where $F_\theta(X)\in\ZZ[X]$ is the minimal polynomial of $\theta$ over $\QQ$.  A lower bound on $|\disc(F_\theta)|$ is provided by Minkowski's theorem, and combining these inequalities leads to a contradiction for large enough $n$.  The statement of this result is slightly complicated, so we delay it until $\S$~\ref{AlgIntIntervalSect}.

Using Minkowski's bound is not strictly necessary, as using the trivial lower bound $|\disc(F_\theta)|\geq1$ would also lead to a contradiction for large enough $n$.  But in practice, using Minkowski's bound leads to a smaller value of $n_0$, which can make a significant difference in any intended application.  To illustrate, in our proof of Theorem~\ref{MainDynamicalTheorem} in the $d=2$ case, we are able to use Theorem~\ref{TotRealIntervalTheorem} to show that if $c$ is totally real and $x^2+c$ is PCF, then $[\QQ(c):\QQ]<3$.  We then dispose of the cases $[\QQ(c):\QQ]=1,2$ with elementary arguments.  If instead we had used only the trivial lower bound $|\disc(F_c)|\geq1$, we could only have deduced that $[\QQ(c):\QQ]<6$, leading to a much more difficult computational challenge to finish the proof that $\PCF_2\cap\QQ^\tr=\{-2,-1,0\}$.

Theorem~\ref{TotRealIntervalTheorem} should be of some independent interest and certainly has other uses beyond the proof of Theorem~\ref{MainDynamicalTheorem}.  In a paper under preparation \cite{NoytaptimDynSpace}, the first author considers the problem of classifying precisely which parameters $c\in\QQ$ have the property that the map $x^2+c$ has only finitely many totally real preperiodic points.  A fully definitive solution to this problem is given by the first author \cite{NoytaptimDynSpace} using methods in arithmetic capacity theory, and for several of the parameters $c$, a complete calculation of the set of all totally real preperiodic points for $x^2+c$ can be made using Theorem~\ref{TotRealIntervalTheorem}. 

The plan of this paper is as follows.  In $\S$~\ref{nDiamSection} we prove Theorem~\ref{nDiamIntervalThm}, the recursion formula for the $n$-diameter of an interval. In $\S$~\ref{AlgIntIntervalSect} we use this recursion and Minkowski's bound to prove our main result on totally real algebraic integers in short intervals.  Finally, in $\S$~\ref{ParSpaceSect} we apply Theorem~\ref{nDiamIntervalThm} and Theorem~\ref{TotRealIntervalTheorem} to the proof of Theorem~\ref{MainDynamicalTheorem}, the dynamical application of classifying totally real PCF parameters in unicritical families.  


\section{The $n$-diameter of an interval}\label{nDiamSection}

In this section we use properties of Jacobi polynomials to prove Theorem~\ref{nDiamIntervalThm}, the recursion formula for the $n$-diameter of an interval. 

Let $\{P_m(x)\}_{m=0}^{\infty}$ be the family of Jacobi polynomials of weight $(\alpha,\beta)=(1,1)$.  Thus each $P_m(x)$ has real coefficients, $\deg P_m=m$, and the family $\{P_m(x)\}_{m=0}^{\infty}$ is orthogonal with respect to the inner product $\langle f ,g\rangle = \int_{-1}^1f(x)g(x)(1-x^2)dx$.  These assumptions determine each $P_m(x)$ up to a real multiplicative constant, and we choose the normalization in which each polynomial $P_m(x)$ is monic.  Szeg\H{o} \cite{MR0372517} is a standard source for orthogonal polynomials.

We also denote by $\{P_m^*(x)\}_{m=0}^{\infty}$ the family of Jacobi polynomials of weight $(\alpha,\beta)=(1,1)$, but normalized as in \cite{MR0372517} so that $P^*_m(1)=m+1$.  It is shown in \cite{MR0372517} $\S$IV.4.21 that the leading coefficient of $P_m^*(x)$ is given by
\begin{equation*}
L_m=\frac{(2m+2)!}{2^m\,m!(m+2)!}.
\end{equation*}
Thus $P^*_m(x)=L_mP_m(x)$, which implies that
\begin{equation}\label{PmAndLm}
|P_m(\pm1)|=P_m(1)=\frac{m+1}{L_m}=\frac{2^m(m+1)!(m+2)!}{(2m+2)!}.
\end{equation}
It is useful for recursion purposes to calculate the ratio
\begin{equation}\label{PmAtOneRatio}
\frac{P_m(1)}{P_{m-1}(1)}=\frac{m+2}{2m+1},
\end{equation}
which follows from (\ref{PmAndLm}).

By the general theory of orthogonal polynomials, there is an alternate characterization of the family $\{P_m(x)\}_{m=0}^{\infty}$.  For each $m\geq0$, $y=P_m(x)$ is the unique monic polynomial of degree $m$ satisfying the differential equation  
\begin{equation}\label{DiffEq}
(1-x^2)y'' - 4xy' + m(m+3)y=0;
\end{equation}
this is proved in \cite{MR0372517}, Thm. 4.2.2.  

\begin{lem}\label{MonicJacobiRecursion} 
The monic Jacobi polynomials $\{P_m(x)\}_{m=0}^{\infty}$ satisfy the recursion $P_0(x) = 1$, $P_1(x)  = x$, and 
\begin{equation*}
\begin{split}
P_m(x) & = xP_{m-1}(x)-C_mP_{m-2}(x) \hskip1cm (m\geq2)
\end{split}
\end{equation*}
where 
\begin{equation}\label{MonicJacobiRecursionConstants}
C_m=\frac{m^2-1}{4m^2-1}.
\end{equation}
Moreover, the discriminant of Jacobi polynomials satisfy the recursion $|\disc(P_1)|  =1$ and 
\begin{equation}\label{MonicJacobiDiscrimiantRecursion}
|\disc(P_m)| =\frac{m^m(m+2)^{m-2}}{(2m+1)^{2m-3}}|\disc(P_{m-1})| \hskip1cm (m\geq2).
\end{equation}
\end{lem}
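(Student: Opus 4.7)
The proof splits into two parts, corresponding to the two recursions stated in the lemma.

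For the three-term recursion, since the weight $w(x)=1-x^2$ is even on $[-1,1]$, a standard parity argument gives $P_m(-x)=(-1)^m P_m(x)$. The general three-term recursion for monic orthogonal polynomials takes the form $P_m = (x-a_m)P_{m-1} - b_m P_{m-2}$, and this parity forces $a_m=0$. Writing $b_m = C_m$ and evaluating the reduced recursion $P_m = xP_{m-1} - C_m P_{m-2}$ at $x=1$ gives
\[
C_m = \frac{P_{m-1}(1) - P_m(1)}{P_{m-2}(1)} = \frac{P_{m-1}(1)}{P_{m-2}(1)}\left(1 - \frac{P_m(1)}{P_{m-1}(1)}\right).
\]
Substituting the ratio $P_m(1)/P_{m-1}(1) = (m+2)/(2m+1)$ from (\ref{PmAtOneRatio}) twice and simplifying yields the closed form $C_m = (m+1)(m-1)/((2m-1)(2m+1)) = (m^2-1)/(4m^2-1)$.

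The discriminant recursion is the more substantive step. I will use Schur's method, which combines the second-order ODE (\ref{DiffEq}) with the identity $\disc(P_m) = (-1)^{m(m-1)/2}\prod_i P_m'(x_i)$ valid for any monic polynomial with distinct roots $x_1,\dots,x_m$. Evaluating (\ref{DiffEq}) at each root $x_i$ yields $(1-x_i^2)P_m''(x_i) = 4x_iP_m'(x_i)$, and taking the product over roots — using $\prod_i(1-x_i^2) = P_m(1)^2$ and $\prod_i x_i = (-1)^m P_m(0)$, both consequences of the parity of $P_m$ — produces one equation relating $\disc(P_m)$, $\prod_i P_m''(x_i)$, $P_m(0)$, and $P_m(1)$. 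The quantity $\prod_i P_m''(x_i)$ can then be expressed in terms of $\disc(P_{m-1})$ by twice-differentiating the three-term recursion and evaluating at the roots of $P_m$, together with the resultant identity $\Res(P_m,P_{m-1}) = (-C_m)^{m-1}\Res(P_{m-1},P_{m-2})$ (from evaluating the recursion at the roots of $P_{m-1}$).

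Combining these identities with the closed form $C_m = (m^2-1)/(4m^2-1)$ and the explicit expression $P_m(1) = 2^m(m+1)!(m+2)!/(2m+2)!$ from (\ref{PmAndLm}) reduces the ratio $|\disc(P_m)|/|\disc(P_{m-1})|$ to $m^m(m+2)^{m-2}/(2m+1)^{2m-3}$. The principal obstacle is the combinatorial bookkeeping in this final step — the chains of factorials, powers, and sign factors must be collected and cancelled carefully to land exactly on the target closed form.
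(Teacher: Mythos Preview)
Your treatment of the three-term recursion matches the paper's and is fine. The discriminant argument, however, has a genuine gap.

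The step where you evaluate the ODE~(\ref{DiffEq}) at the roots $x_i$ of $P_m$ and then take the product over $i$ degenerates whenever $m$ is odd. In that case $P_m$ is odd, so $x=0$ is a root; at this root the identity $(1-x_i^2)P_m''(x_i)=4x_iP_m'(x_i)$ reads $P_m''(0)=0$, which is already forced by parity. Thus $\prod_i P_m''(x_i)=0$ and $\prod_i x_i=(-1)^mP_m(0)=0$, so your product identity collapses to $0=0$ and carries no information about $\disc(P_m)$. Even for even $m$, the claim that $\prod_i P_m''(x_i)$ can be expressed in terms of $\disc(P_{m-1})$ by twice-differentiating the recursion is not justified: the twice-differentiated recursion gives
\[
P_m''(x_i)=2P_{m-1}'(x_i)+x_iP_{m-1}''(x_i)-C_mP_{m-2}''(x_i),
\]
a three-term sum whose product over the roots of $P_m$ does not factor; and in any case $\disc(P_{m-1})$ involves $P_{m-1}'$ evaluated at the roots of $P_{m-1}$, not of $P_m$.

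The paper avoids both difficulties by using, instead of the second-order ODE, the \emph{first-order} relation
\[
(1-\alpha^2)P_m'(\alpha)=N_mP_{m-1}(\alpha)\qquad\text{whenever }P_m(\alpha)=0,
\]
with $N_m=m(m+2)/(2m+1)$ (Szeg\H{o} \S4.5). Taking the product over the roots of $P_m$ relates $|\disc(P_m)|=\prod_\alpha|P_m'(\alpha)|$ directly to the resultant $\Delta_m=|\Res(P_m,P_{m-1})|$, with no degeneracy and no second derivatives. The resultant recursion $\Delta_m=C_m^{m-1}\Delta_{m-1}$ (which you do state correctly) then finishes the job. So the missing idea is to replace the ODE-at-roots relation by this derivative--lowering relation linking $P_m'$ to $P_{m-1}$.
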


\begin{proof}
The calculations of $P_0(x)$ and $P_1(x)$ follow from the monic assumption together with the orthogonality $\langle P_0,P_1\rangle=0$.  All orthogonal polynomials satisfy a recursion of the type
\begin{equation*}
P_m(x) = (A_mx+B_m)P_{m-1}(x)-C_mP_{m-2}(x) \hskip1cm (m\geq2),
\end{equation*}
see \cite{MR0372517} $\S$ 3.1.  That $A_m=1$ follows from the monic assumption.  That $B_m=0$ follows from the fact that $P_m(x)$ is either even or odd according to the parity of $m$ (see \cite{MR0372517} $\S$ 4.1), and therefore each $P_m(x)$ has vanishing $x^{m-1}$ term. To calculate the $C_m$, we evaluate the recursion at $x=1$, we find $P_m(1) = P_{m-1}(1)-C_mP_{m-2}(1)$ and thus 
\begin{equation*}
C_m=\frac{P_{m-1}(1)-P_{m}(1)}{P_{m-2}(1)} = \frac{m^2-1}{4m^2-1}.
\end{equation*}
by (\ref{PmAndLm}).

To prove the recursion (\ref{MonicJacobiDiscrimiantRecursion}) for the discriminant, we use a method of Schur (see \cite{MR0372517} $\S$ 6.71).  For each $m\geq2$, set 
\[
\Delta_m=|\Res(P_m,P_{m-1})|=\prod_{P_m(\alpha)=0}|P_{m-1}(\alpha)|=\prod_{P_{m-1}(\beta)=0}|P_{m}(\beta)|.
\]
The recursion $P_m(x) = xP_{m-1}(x)-C_mP_{m-2}(x)$ shows that 
\begin{equation}\label{DeltaRecursion}
\Delta_m=C_m^{m-1}\Delta_{m-1}.
\end{equation}
To apply this recursion to the discriminant of the Jacobi polynomials $P_m(x)$, we note also that
\[
(1-\alpha^2)P_m'(\alpha)=N_mP_{m-1}(\alpha) \hskip1cm \text{whenever }P_{m}(\alpha)=0,
\]
where 
\[
N_m=\frac{mP_m(1)}{P_{m-1}(1)}=\frac{m(m+2)}{2m+1};
\]
this is derived in \cite{MR0372517} $\S$ 4.5.  Therefore
\begin{equation*}
\begin{split}
|\disc(P_m)| & = |\Res(P_m,P_m')| \\
	& = \prod_{P_m(\alpha)=0}|P_m'(\alpha)| \\
	& = N_m^m\prod_{P_m(\alpha)=0}|1-\alpha^2|^{-1}|P_{m-1}(\alpha)| \\
	& = N_m^m|P_m(1)|^{-1}|P_m(-1)|^{-1}\prod_{P_m(\alpha)=0}|P_{m-1}(\alpha)| \\
	& = N_m^mP_m(1)^{-2}\Delta_m \\
\end{split}
\end{equation*}
We conclude that 
\begin{equation*}
\begin{split}
\frac{|\disc(P_m)|}{|\disc(P_{m-1})|} & = \frac{N_m^m}{N_{m-1}^{m-1}}\cdot\frac{P_{m-1}(1)^2}{P_m(1)^2}\cdot\frac{\Delta_m}{\Delta_{m-1}} \\
	& = \frac{m^m(m+2)^{m-2}}{(2m+1)^{2m-3}},
\end{split}
\end{equation*}
by substituting the definition of $N_m$ together with (\ref{PmAtOneRatio}) and (\ref{DeltaRecursion}) and simplifying; this completes the proof of the recursion (\ref{MonicJacobiDiscrimiantRecursion}).
\end{proof}

The following lemma may be viewed as a consequence of the ``electrostatic interpretation'' of the zeros of Jacobi polynomials, an idea which goes back at least to Szeg\H{o} \cite{MR0372517}.  For completeness we sketch the proof given in \cite{MR0372517} $\S$ VI.6.7.
	
\begin{lem}\label{feketeoninterval} 
For each $n\geq2$, the supremum
\[
\sup_{x_1,x_2,\dots,x_n\in [-1,1]}\prod_{i<j}|x_i-x_j|^{2}
\]
is achieved when the points $x_1,x_2,\dots,x_n$ are the roots of the polynomial 
\[
Q_n(x)=(x^2-1)P_{n-2}(x).
\]
In particular $d_n([-1,1])^{n(n-1)}=|\disc(Q_n)|$.
\end{lem}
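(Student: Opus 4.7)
The plan is to set up this as a constrained maximization problem. Let $\Phi(x_1,\dots,x_n)=\prod_{i<j}(x_i-x_j)^2$, which is continuous on the compact set $[-1,1]^n$, so the supremum is attained. By symmetry I may reorder so that $x_1\leq x_2\leq\cdots\leq x_n$, and at any maximizing configuration the $x_i$ are necessarily distinct (otherwise $\Phi=0$).

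First I would argue that in any maximizer one has $x_1=-1$ and $x_n=1$. Holding $x_2,\dots,x_n$ fixed and varying $x_1\in[-1,x_2)$, the product $\prod_{j\geq2}(x_j-x_1)^2$ is strictly decreasing in $x_1$, so the maximum forces $x_1=-1$; the mirror argument gives $x_n=1$. Thus I can write the optimal node polynomial as $W(x)=\prod_{i=1}^n(x-x_i)=(x^2-1)R(x)$, where $R$ is monic of degree $n-2$ with simple roots in $(-1,1)$.

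Next I would exploit that interior maximizers are critical points. Taking $\partial(\log\Phi)/\partial x_k=0$ at each interior node $x_k$ gives the classical electrostatic equilibrium condition
\[
\sum_{j\neq k}\frac{1}{x_k-x_j}=0,
\]
which is equivalent to $W''(x_k)=0$ (since $\tfrac12 W''(x_k)/W'(x_k)$ equals the above sum at a simple zero of $W$). Substituting $W=(x^2-1)R$ and computing $W''$ at a root $x_k$ of $R$ reduces this to
\[
(1-x_k^2)R''(x_k)-4x_k R'(x_k)=0
\]
for every root $x_k$ of $R$. The polynomial $(1-x^2)R''-4xR'$ has degree at most $n-2$ and vanishes at all $n-2$ roots of $R$, so it equals $cR$ for some constant $c$; matching leading coefficients gives $c=-(n-2)(n+1)=-m(m+3)$ where $m=n-2$.

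Therefore $R$ is a monic polynomial of degree $m$ satisfying the Jacobi differential equation (\ref{DiffEq}), and by the uniqueness statement recalled just before Lemma~\ref{MonicJacobiRecursion} we conclude $R=P_{n-2}$, i.e.\ $W(x)=Q_n(x)$. The final identity $d_n([-1,1])^{n(n-1)}=|\disc(Q_n)|$ is then immediate from (\ref{nDiamAndDisc}) applied to $Q_n$. The only step requiring any real care is the leading-coefficient calculation that produces exactly the Jacobi constant $m(m+3)$; everything else is formal manipulation of the equilibrium equations.
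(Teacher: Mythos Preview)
Your argument is correct and follows essentially the same route as the paper: compactness gives a maximizer with distinct nodes, monotonicity forces the endpoints to be $\pm1$, the interior equilibrium condition yields the Jacobi differential equation for the remaining factor, and the uniqueness of the monic solution identifies it as $P_{n-2}$. The only cosmetic difference is that you phrase the equilibrium condition as $W''(x_k)=0$ for the full node polynomial $W=(x^2-1)R$ and then expand, whereas the paper separates the endpoint contributions $\frac{1}{x_k\pm1}$ from the start and works directly with $R$; both computations produce the same equation $(1-x_k^2)R''(x_k)-4x_kR'(x_k)=0$.
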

	
\begin{proof} 
Let $-1\leq x_1<x_2<\dots<x_{n-1}<x_n\leq 1$ be an ordered list of points at which the desired supremum is achieved; such points exist by compactness.  Since the expression inside the supremum increases as $x_n$ increases, we must have $x_n=1$, and similarly $x_1=-1$.

For each index $2\leq k\leq n-1$, by the maximality assumption, the point $x_k$ must be a critical point for the function
\[
g(x)=\log|x+1|+\log|x-1|+\sum_{\substack{2\leq j\leq n-1 \\ j\neq k}}\log|x-x_j|.
\] 
and therefore
\begin{equation}\label{minimizeenergyeq}
\frac{1}{x_k+1}+\frac{1}{x_k-1}+\sum_{\substack{2\leq j\leq n-1 \\ j\neq k}}\frac{1}{x_k-x_j}=0.
\end{equation}

Define $f(x)=\prod_{j=2}^{n-1}(x-x_j)$, so the proof will be complete if we show that $f(x)=P_{n-2}(x)$.  For each $2\leq k\leq n-1$, let $f_k(x)=f(x)/(x-x_k)$.  Thus
\[
\frac{f_k'(x)}{f_k(x)}=\sum_{\substack{2\leq j\leq n-1 \\ j\neq k}}\frac{1}{x-x_j}
\] 
and
\begin{equation*}
\begin{split}
f'(x) & =(x-x_k)f_k'(x)+f_k(x) \\
f''(x) & =(x-x_k)f_k''(x)+2f_k'(x).
\end{split}
\end{equation*}
For each $2\leq k\leq n-1$ we obtain $f'(x_k)=f_k(x_k)$ and $f''(x_k)=2f_k'(x_k)$, and therefore  
\[
\frac{f''(x_k)}{2f'(x_k)}=\frac{f_k'(x_k)}{f_k(x_k)}=\sum_{\substack{2\leq j\leq n-1 \\ j\neq k}}\frac{1}{x_k-x_j}.
\]
Combining this with (\ref{minimizeenergyeq}) we obtain 
\begin{align*}
\frac{f''(x_k)}{2f'(x_k)}+\frac{1}{x_k-1}+\frac{1}{x_k+1}=0
\end{align*}
which simplifies to $(x^2_k-1)f''(x_k)+4x_kf'(x_k)=0$.  This tells us that the polynomials $f(x)$ and $(x^2-1)f''(x)+4xf'(x)$ share the same roots.  Since the former is monic and the latter has leading coefficient $(n-2)(n-3)+4(n-2)=(n+1)(n-2)$, we obtain 
\[
(x^2-1)f''(x)+4xf'(x)=(n+1)(n-2)f(x).
\]
Since $f(x)$ is monic and satisfies the differential equation (\ref{DiffEq}) with $m=n-2$, we conclude that $f(x)=P_{n-2}(x)$, completing the proof.
\end{proof}

\begin{lem}\label{MonicModifiedJacobiRecursion} 
The polynomials $\{Q_n(x)\}_{n=2}^{\infty}$ defined by $Q_n(x)=(x^2-1)P_{n-2}(x)$ satisfy the recursion $|\disc(Q_2)|= 4$ and 
\begin{equation*}
|\disc(Q_n)| = \frac{n^n(n-2)^{n-2}}{(2n-3)^{2n-3}} |\disc(Q_{n-1})| \hskip1cm (n\geq3).
\end{equation*}
\end{lem}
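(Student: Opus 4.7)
The plan is to exploit the factorization $Q_n(x) = (x^2-1)P_{n-2}(x)$ together with the multiplicativity of the discriminant. For monic polynomials $f, g$ over $\CC$, one has the identity $\disc(fg) = \disc(f)\disc(g)\Res(f,g)^2$. Applied here, $\disc(x^2-1) = 4$, and
$$\Res(x^2-1, P_{n-2}) = P_{n-2}(1)\,P_{n-2}(-1) = \pm P_{n-2}(1)^2,$$
using the parity of the Jacobi polynomials already noted in the proof of Lemma~\ref{MonicJacobiRecursion}. Thus
$$|\disc(Q_n)| = 4\,|\disc(P_{n-2})|\,P_{n-2}(1)^4.$$
The base case $|\disc(Q_2)| = 4$ is then immediate, since $Q_2(x) = x^2-1$.

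Next I would form the ratio of this identity at $n$ and $n-1$, obtaining
$$\frac{|\disc(Q_n)|}{|\disc(Q_{n-1})|} = \frac{|\disc(P_{n-2})|}{|\disc(P_{n-3})|} \cdot \left(\frac{P_{n-2}(1)}{P_{n-3}(1)}\right)^4.$$
Both factors on the right are supplied by earlier work: the Jacobi discriminant recursion (\ref{MonicJacobiDiscrimiantRecursion}) with $m = n-2$ gives the first, while the ratio formula (\ref{PmAtOneRatio}) with $m = n-2$ gives $P_{n-2}(1)/P_{n-3}(1) = n/(2n-3)$. Substituting and then collecting powers of $n$, $n-2$, and $2n-3$ yields exactly the claimed ratio $\frac{n^n(n-2)^{n-2}}{(2n-3)^{2n-3}}$.

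There is no conceptual obstacle; the entire argument is a short bookkeeping exercise in exponent arithmetic once the multiplicativity identity for the discriminant is invoked. The only subtlety worth flagging is that the Jacobi discriminant recursion (\ref{MonicJacobiDiscrimiantRecursion}) is stated for $m \geq 2$, so the derivation via ratios is fully rigorous only for $n \geq 4$. The edge case $n = 3$ can be verified by direct computation: $Q_3(x) = x^3 - x$ has roots $0, \pm 1$ and discriminant $4$, matching the claim $\frac{3^3 \cdot 1^1}{3^3}\,|\disc(Q_2)| = 4$.
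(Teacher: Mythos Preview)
Your argument is essentially identical to the paper's: both invoke the multiplicativity identity for the discriminant to obtain $|\disc(Q_n)| = 4\,P_{n-2}(1)^4\,|\disc(P_{n-2})|$, then take the ratio at consecutive $n$ and feed in (\ref{PmAtOneRatio}) and (\ref{MonicJacobiDiscrimiantRecursion}). Your treatment is in fact slightly more careful, since you flag and handle the edge case $n=3$ separately (where $m=n-2=1$ falls outside the stated range $m\geq 2$ of the discriminant recursion), a point the paper glosses over.
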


\begin{proof} 
We have $|\disc(Q_2)|=|\disc(x^2-1)|=4$.  If $f(x),g(x)\in\CC[x]$ are monic polynomials, recall the well-known identity 
\[
|\disc(fg)|=|\disc(f)||\Res(f,g)|^2|\disc(g)|
\]
for the discriminant of their product, where $|\Res(f,g)|=\prod_{f(\alpha)=0}|g(\alpha)|$.  Using this and the fact that $|P_m(-x)|=|P_m(x)|$, we have
\begin{equation*}
\begin{split}
|\disc(Q_n)| & = |\disc(x^2-1)||\Res(x^2-1,P_{n-2})|^2|\disc(P_{n-2})| \\
	& = 4|P_{n-2}(1)|^4|\disc(P_{n-2})|,
\end{split}
\end{equation*}
and therefore when $n\geq3$ we use (\ref{PmAtOneRatio}) and Lemma~\ref{MonicJacobiRecursion} to obtain
\begin{equation*}
\begin{split}
\frac{|\disc(Q_n)|}{|\disc(Q_{n-1})|} & = \frac{|P_{n-2}(1)|^4}{|P_{n-3}(1)|^4}\frac{|\disc(P_{n-2})|}{|\disc(P_{n-3})|} \\
	& = \frac{n^n(n-2)^{n-2}}{(2n-3)^{2n-3}}, \\
\end{split}
\end{equation*}
which is the desired recursion.
\end{proof}

\begin{proof}[Proof of Theorem~\ref{nDiamIntervalThm}]
For $n\geq2$ we define $D_n=2^{-n(n-1)}d_n([-1,1])^{n(n-1)}$.  Thus 
\begin{equation*}
d_n([-1,1]) = 2D_n^{1/n(n-1)},
\end{equation*}
and more generally, in view of the transformation property $d_n(r S+t)=|r|d_n(S)$ for $r,t\in \RR$, for $L=(\beta-\alpha)/2$ we have
\begin{equation*}
d_n([\alpha,\beta]) = d_n([-L,L])=Ld_n([-1,1])=L2D_n^{1/n(n-1)}=(\beta-\alpha)D_n^{1/n(n-1)}.
\end{equation*}

We now just have to show that the sequence $\{D_n\}_{n=2}^\infty$ satisfies the initial condition and recurrence relation given in the statement of the theorem.  Clearly $D_2=2^{-2}d_2([-1,1])^2=1$.  Using Lemma~\ref{feketeoninterval} and Lemma~\ref{MonicModifiedJacobiRecursion}, for $n\geq3$ we have
\begin{equation*}
\begin{split}
\frac{D_n}{D_{n-1}} & = \frac{2^{-n(n-1)}d_n([-1,1])^{n(n-1)}}{2^{-(n-1)(n-2)}d_{n-1}([-1,1])^{(n-1)(n-2)}} \\
	& = 2^{-2n+2}\frac{|\disc(Q_n)|}{|\disc(Q_{n-1})|} \\
	& = \frac{n^n(n-2)^{n-2}}{2^{2n-2}(2n-3)^{2n-3}},
\end{split}
\end{equation*}
which is the stated recurrence relation for the sequence $\{D_n\}_{n=2}^\infty$.
\end{proof}


\section{Algebraic integers with conjugates in a short interval}\label{AlgIntIntervalSect}

In this section we prove our main result on totally real algebraic integers with all conjugates in a short interval.  We first need two preliminary lemmas.

\begin{lem}\label{RecursiveSequencesLemma}
Let $n_0$ be an integer and let $\{a_n\}_{n=n_0}^{\infty}$ and $\{b_n\}_{n=n_0}^{\infty}$ be sequences of positive real numbers.  If $a_{n_0}<b_{n_0}$, and  $a_{n+1}/a_{n}<b_{n+1}/b_{n}$ for all $n\geq n_0$, then $a_n<b_n$ for all $n\geq n_0$.
\end{lem}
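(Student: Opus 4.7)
The plan is a direct induction on $n\ge n_0$. The base case $n=n_0$ is precisely the hypothesis $a_{n_0}<b_{n_0}$. For the inductive step, I would assume $a_n<b_n$ for some $n\ge n_0$ and write
\[
a_{n+1}=a_n\cdot\frac{a_{n+1}}{a_n},\qquad b_{n+1}=b_n\cdot\frac{b_{n+1}}{b_n}.
\]
By hypothesis both factors on the left are strictly smaller than the corresponding factors on the right, and positivity of all four quantities lets me multiply the two strict inequalities termwise (since $0<x<y$ and $0<u<v$ imply $xu<yu<yv$). This yields $a_{n+1}<b_{n+1}$, closing the induction.

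I do not anticipate any real obstacle here; the only point that requires care is the appeal to positivity when multiplying the two strict inequalities, which is why the hypothesis that both sequences consist of positive real numbers is essential. (Without positivity, neither forming the ratios $a_{n+1}/a_n$ and $b_{n+1}/b_n$ nor preserving strict inequality under multiplication would be legitimate.)
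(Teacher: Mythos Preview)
Your proof is correct and follows essentially the same approach as the paper: both argue by induction on $n\ge n_0$, using positivity to combine the inductive hypothesis $a_n<b_n$ with the ratio hypothesis $a_{n+1}/a_n<b_{n+1}/b_n$. The only cosmetic difference is that the paper phrases the inductive step as showing $a_{n+1}/b_{n+1}<1$, whereas you multiply the two strict inequalities directly; these are equivalent.
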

\begin{proof}
The proof is an induction with base case $n=n_0$.  If $a_{n}<b_{n}$ for some $n\geq n_0$, then
\[
\frac{a_{n+1}}{b_{n+1}}=\frac{a_{n+1}/a_n}{b_{n+1}/b_{n}}\cdot\frac{a_n}{b_n}<1
\]
and therefore $a_{n+1}<b_{n+1}$.
\end{proof}

\begin{lem}\label{MinkowskiLemma}
Let $F(X)\in\ZZ[X]$ be a monic, irreducible polynomial of degree $n\geq2$ with $n$ real roots.  Then $|\disc(F)|\geq n^{2n}/n!^2$. 
\end{lem}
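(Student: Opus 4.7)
The plan is to reduce the claim to the classical Minkowski lower bound on the discriminant of a number field in the totally real case. Let $\theta$ be a root of $F$ and set $K=\QQ(\theta)$; since $F$ is monic and irreducible of degree $n$, it is the minimal polynomial of $\theta$ and $[K:\QQ]=n$. The hypothesis that all $n$ roots of $F$ are real means that every $\QQ$-embedding $K\hookrightarrow\CC$ actually lands in $\RR$, so $K$ is totally real: in standard notation, its number of complex places is $s=0$ and the number of real places is $r=n$.

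Next I would compare $\disc(F)$ to the field discriminant $d_K$. The order $\ZZ[\theta]$ sits inside $\Ocal_K$ with finite index, and by the standard change-of-order formula one has
\[
\disc(F) \;=\; \disc(\ZZ[\theta]) \;=\; [\Ocal_K:\ZZ[\theta]]^{2}\,d_K.
\]
In particular $|\disc(F)|\geq|d_K|$, so it suffices to prove $|d_K|\geq n^{2n}/n!^2$.

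For this lower bound I would invoke Minkowski's theorem from the geometry of numbers, applied to the image of $\Ocal_K$ as a full-rank lattice in Minkowski space $\RR^n$ (of covolume $2^{-s}\sqrt{|d_K|}$). The standard argument, using the convex body $\{\sum|x_i|+2\sum|z_j|\leq t\}$ together with the AM-GM inequality, produces a nonzero $\alpha\in\Ocal_K$ satisfying
\[
|N_{K/\QQ}(\alpha)|\;\leq\;\frac{n!}{n^n}\Bigl(\frac{4}{\pi}\Bigr)^{s}\sqrt{|d_K|}.
\]
Since $\alpha$ is a nonzero algebraic integer, $|N_{K/\QQ}(\alpha)|\geq 1$, which rearranges to $|d_K|\geq (n^n/n!)^2(\pi/4)^{2s}$. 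Specializing to $s=0$ (our totally real case) gives $|d_K|\geq n^{2n}/n!^2$, and combining with $|\disc(F)|\geq|d_K|$ yields the claim.

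There is no real obstacle here; the lemma is essentially the observation that for a totally real field the archimedean penalty $(4/\pi)^s$ in Minkowski's bound collapses to $1$. The only minor point to verify is the direction of the index inequality between $\disc(F)$ and $d_K$, which follows from the standard fact that enlarging the order can only decrease the discriminant by a perfect square factor.
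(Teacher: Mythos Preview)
Your proof is correct and follows essentially the same approach as the paper: both relate $|\disc(F)|$ to the field discriminant $|d_K|$ via the index formula $\disc(F)=[\Ocal_K:\ZZ[\theta]]^2 d_K$, and then invoke Minkowski's lower bound $|d_K|\geq n^{2n}/n!^2$ for totally real $K$. You include a brief sketch of the Minkowski argument where the paper simply cites it, but the strategy is identical.
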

\begin{proof}
Let $\theta\in\RR$ be a root of $F(X)$, let $K=\QQ(\theta)$, and let $\Delta_K\in\ZZ$ be the discriminant of $K$.  Let $\sigma_i:K\hookrightarrow\RR$ $(i=1,2,\dots,n)$ denote the $n$ distinct embeddings of $K$ into $\RR$.  By a standard calculation, $|\disc(F)|=|\det V|^2$, where $V=(\sigma_i(\theta^j))$, and thus
\[
|\disc(F)|=|\det V|^2=|\Ocal_K/\ZZ[\theta]|^2\cdot |\Delta_K|\geq |\Delta_K|\geq n^{2n}/n!^2.
\]  
Here the final inequality is Minkoswki's lower bound on the discriminant in the special case of a totally real number field $K$.
\end{proof}

We are now ready to state and prove the main result of this section.  

\begin{thm}\label{TotRealIntervalTheorem}
Let $[\alpha,\beta]$ be a real interval of length $0<\beta-\alpha<4$.  Define sequences $\{a_n\}_{n=2}^\infty$ and $\{b_n\}_{n=2}^\infty$ by
\begin{equation*}
\begin{split}
a_n & = d_n([\alpha,\beta])^{n(n-1)} =(\beta-\alpha)^{n(n-1)} D_n \\
b_n & = \frac{n^{2n}}{n!^2},
\end{split}
\end{equation*}
where the sequence $\{D_n\}$ is defined in Theorem~\ref{nDiamIntervalThm}.  Suppose that there exists an integer $n_0\geq2$ with the properties that
\begin{equation}
a_{n_0}<b_{n_0} \hskip1cm \text{ and } \hskip1cm \frac{a_{n_0+1}}{a_{n_0}}<\frac{b_{n_0+1}}{b_{n_0}}.
\end{equation}
If $\theta$ is an algebraic integer such that the minimal polynomial $F_\theta(X)\in\ZZ[X]$ of $\theta$ over $\QQ$ has all $n$ roots in the interval $[\alpha,\beta]$, then $[\QQ(\theta):\QQ]<n_0$.
\end{thm}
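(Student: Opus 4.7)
The plan is to sandwich $|\disc(F_\theta)|$ between an upper bound coming from the $n$-diameter and a lower bound coming from Minkowski's theorem, and then invoke Lemma~\ref{RecursiveSequencesLemma} to show these bounds are incompatible once $n := [\QQ(\theta):\QQ] \geq n_0$. Assume for contradiction that $n \geq n_0$. Since $F_\theta \in \ZZ[X]$ is monic of degree $n$ with all $n$ roots in $[\alpha,\beta]$, identity~(\ref{nDiamAndDisc}) applied to $F_\theta$ yields
\[
|\disc(F_\theta)| \leq d_n([\alpha,\beta])^{n(n-1)} = a_n,
\]
while Lemma~\ref{MinkowskiLemma} (applicable since $n \geq n_0 \geq 2$ and $F_\theta$ has $n$ real roots) yields
\[
|\disc(F_\theta)| \geq n^{2n}/n!^2 = b_n,
\]
so $b_n \leq a_n$. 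A contradiction will follow once I establish the opposite inequality $a_n < b_n$ for every $n \geq n_0$.

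That last inequality is precisely the conclusion of Lemma~\ref{RecursiveSequencesLemma}, provided I verify its hypotheses. The base case $a_{n_0} < b_{n_0}$ is given. The remaining requirement is the ratio condition $a_{m+1}/a_m < b_{m+1}/b_m$ for every $m \geq n_0$; the second hypothesis of the theorem supplies this only at $m = n_0$, so I would propagate it to all larger $m$ by showing that
\[
\rho_m := \frac{a_{m+1}/a_m}{b_{m+1}/b_m}
\]
is non-increasing on $\{m \geq n_0\}$, whence $\rho_m \leq \rho_{n_0} < 1$ throughout.

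The main obstacle is establishing this monotonicity of $\{\rho_m\}_{m \geq n_0}$. Using the recursion for $D_n$ in Theorem~\ref{nDiamIntervalThm} together with the definition of $b_m$, one obtains the explicit formulas
\[
\frac{a_{m+1}}{a_m} = (\beta-\alpha)^{2m}\,\frac{(m+1)^{m+1}(m-1)^{m-1}}{2^{2m}(2m-1)^{2m-1}}, \qquad \frac{b_{m+1}}{b_m} = \left(1+\frac{1}{m}\right)^{2m},
\]
and the inequality $\rho_{m+1} \leq \rho_m$ reduces to an elementary (but delicate) one-variable estimate in $m$ depending on $L := \beta-\alpha$. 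Taking logarithms and using the standard expansions of $\log(1\pm 1/m)$ and $\log(1-1/(2m))$ one finds the asymptotic
\[
\log \rho_m = 2m\log(L/4) + \log m + O(1);
\]
since $L < 4$, the sequence $\{\rho_m\}$ tends to $0$ and is eventually strictly decreasing, with a single interior maximum at approximately $m^{\ast} = 1/(2\log(4/L))$. A careful bookkeeping of the low-order terms shows that the hypothesis $\rho_{n_0} < 1$ already forces $n_0 \geq m^{\ast}$, so the sequence $\{\rho_m\}$ is monotone decreasing on $\{m \geq n_0\}$. With this in place, Lemma~\ref{RecursiveSequencesLemma} delivers $a_n < b_n$ for every $n \geq n_0$, contradicting the sandwich inequality $b_n \leq a_n$ and completing the proof.
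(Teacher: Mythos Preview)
Your overall architecture---sandwich $|\disc(F_\theta)|$ between $b_n$ and $a_n$ and then invoke Lemma~\ref{RecursiveSequencesLemma}---is exactly the paper's strategy, and your explicit formulas for $a_{m+1}/a_m$ and $b_{m+1}/b_m$ are correct. The gap is in the step where you propagate the single ratio hypothesis $\rho_{n_0}<1$ to all $m\geq n_0$.

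You try to do this by arguing that $\{\rho_m\}$ is non-increasing past $n_0$, and you support this with an asymptotic expansion $\log\rho_m = 2m\log(L/4)+\log m + O(1)$ together with the assertion that ``careful bookkeeping of the low-order terms shows that the hypothesis $\rho_{n_0}<1$ already forces $n_0\geq m^\ast$.'' This is where the argument breaks down. An $O(1)$ asymptotic cannot locate the maximum of $\rho_m$ precisely enough to compare it with any \emph{fixed} $n_0$; for $L$ close to $4$ the approximate critical point $m^\ast = 1/(2\log(4/L))$ can be arbitrarily large, and you have provided no uniform control on the error terms that would let you conclude $n_0\geq m^\ast$ from $\rho_{n_0}<1$ alone. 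Nor have you shown that $\rho_m$ genuinely has a \emph{single} interior maximum---that too is inferred only from the leading asymptotics. In short, the monotonicity claim is asserted, not proved.

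The paper closes this gap differently and more cleanly. It does not attempt to show that $\rho_m$ is monotone. Instead it rewrites the inequality $\rho_{n-1}<1$ (equivalently $a_n/a_{n-1}<b_n/b_{n-1}$) in the separated form
\[
2\log(L/2) \;<\; h(n), \qquad h(x)=\frac{1}{x-1}\log\!\left(\frac{x^{x-2}(2x-3)^{2x-3}}{(x-1)^{2x-2}(x-2)^{x-2}}\right),
\]
where the left side is a constant depending only on $L$ and the right side depends only on $n$. A direct computation gives
\[
h'(x)=\frac{1}{(x-1)^2}\Bigl(\tfrac{2}{x}+\log(2x-3)+\log x-\log(x-2)-2\Bigr),
\]
which is positive for $x\geq 3$ by an elementary check. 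Hence $h$ is increasing on $[3,\infty)$, and the single assumption $2\log(L/2)<h(n_0+1)$ immediately yields $2\log(L/2)<h(n)$ for all $n\geq n_0+1$, i.e.\ $\rho_m<1$ for all $m\geq n_0$. No asymptotics, no location of maxima, and no dependence on $L$ enters the monotonicity step. If you replace your asymptotic paragraph with this derivative computation, your proof becomes complete and coincides with the paper's.
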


\begin{proof}
We are going to show that $a_n<b_n$ for all $n\geq n_0$.  This will be sufficient to prove the theorem, because if this holds, and if $\theta$ is an algebraic integer of degree $n=[\QQ(\theta):\QQ]$, with minimal polynomial $F_\theta(X)\in\ZZ[X]$ having all $n$ roots in the interval $[\alpha,\beta]$, then Lemma~\ref{MinkowskiLemma} and the definition of the $n$-diameter of the interval $[\alpha,\beta]$ respectively imply the lower and upper bounds
\[
b_n\leq |\disc(F_\theta)| \leq a_n.
\]
But as $a_n<b_n$ for all $n\geq n_0$, it follows that $n<n_0$ which is the desired result of the theorem.

To show that $a_n<b_n$ for all $n\geq n_0$, note that since it holds for $n=n_0$ by assumption, according to Lemma~\ref{RecursiveSequencesLemma} we just have to show that that $\frac{a_{n+1}}{a_{n}}<\frac{b_{n+1}}{b_{n}}$ for all $n\geq n_0$, or equivalently that
\begin{equation}\label{GrowthRateIneq}
\frac{a_n}{a_{n-1}}<\frac{b_n}{b_{n-1}}
\end{equation}
for all $n\geq n_0+1$.  We are also given that $(\ref{GrowthRateIneq})$ holds for $n=n_0+1$ by assumption.  So we just have to show that $(\ref{GrowthRateIneq})$ holds more generally for all $n\geq n_0+1$.  

By an elementary calculation,
\begin{equation*}
\begin{split}
\frac{b_n}{b_{n-1}}=\frac{n^{2n}}{n!^2}\cdot\frac{(n-1)!^2}{(n-1)^{2(n-1)}}=\frac{n^{2n-2}}{(n-1)^{2n-2}}. 
\end{split}
\end{equation*}
Abbreviating $L=\beta-\alpha$ and using Theorem~\ref{nDiamIntervalThm} we have
\begin{equation*}
\begin{split}
\frac{a_n}{a_{n-1}} & = \frac{L^{n(n-1)}D_n}{L^{(n-1)(n-2)}D_{n-1}} = \frac{L^{2n-2}n^n(n-2)^{n-2}}{2^{2n-2}(2n-3)^{2n-3}}.
\end{split}
\end{equation*}
Thus (\ref{GrowthRateIneq}) is equivalent to 
\begin{equation}\label{GrowthRateIneq2}
\frac{L^{2n-2}n^n(n-2)^{n-2}}{2^{2n-2}(2n-3)^{2n-3}}<\frac{n^{2n-2}}{(n-1)^{2n-2}}
\end{equation}
and an elementary manipulation shows that this is in turn equivalent to the inequality 
\begin{equation}\label{GrowthRateIneq3}
2\log(L/2) < \frac{1}{n-1}\log\left(\frac{n^{n-2}(2n-3)^{2n-3}}{(n-1)^{2n-2}(n-2)^{n-2}}\right).
\end{equation}

By assumption (\ref{GrowthRateIneq3}) holds for $n=n_0+1$, and in order to conclude that it holds for all $n\geq n_0+1$ we just have to show that the function defined by 
\begin{equation*}
h(x) =\frac{1}{x-1}\log\left(\frac{x^{x-2}(2x-3)^{2x-3}}{(x-1)^{2x-2}(x-2)^{x-2}}\right)
\end{equation*}
is increasing for $x\geq3$.  An elementary calculation shows that
\begin{equation*}
h'(x) =\frac{1}{(x-1)^2}\left(\frac{2}{x}+\log(2x-3)+\log x-\log(x-2)-2\right).
\end{equation*}
A simple calculus argument shows that $h'(x)>0$ for all $x\geq3$.  Thus $h(x)$ is increasing and we conclude that (\ref{GrowthRateIneq3}) and hence (\ref{GrowthRateIneq}) holds for all $n\geq n_0+1$, completing the proof of the theorem.
\end{proof}

The following sample application of Theorem~\ref{TotRealIntervalTheorem} shows that it can be combined with elementary arguments to give sharp results.  This corollary will also immediately imply the degree $d\geq3$ case of Theorem~\ref{MainDynamicalTheorem}.  Given a real interval $[\alpha,\beta]$, denote by $T_{[\alpha,\beta]}$ the set of all algebraic integers $\theta$ with the property that all $[\QQ(\theta):\QQ]$ embeddings of $\theta$ into $\CC$ lie in the real interval $[\alpha,\beta]$.

\begin{cor}\label{TotRealIntervalCor}
If $[\alpha,\beta]$ is a real interval of length $\beta-\alpha<\sqrt5$, then $T_{[\alpha,\beta]}\subseteq\ZZ$.
\end{cor}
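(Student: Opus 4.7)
The plan is to apply Theorem~\ref{TotRealIntervalTheorem} with $n_0=3$, which will force $[\QQ(\theta):\QQ]\leq 2$ for every $\theta\in T_{[\alpha,\beta]}$, and then to rule out the quadratic case by a short congruence argument on discriminants modulo $4$.

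Setting $L=\beta-\alpha<\sqrt{5}$, we would first verify the two numerical hypotheses of Theorem~\ref{TotRealIntervalTheorem} at $n_0=3$. Since $D_3=1/16$ and $b_3=3^6/(3!)^2=729/36$, the inequality $a_3<b_3$ reduces to $L^6<324$, which is easily satisfied because $L^6<5^3=125$. For the growth-rate condition, the recurrence for $D_n$ in Theorem~\ref{nDiamIntervalThm} specializes at $n=4$ to $a_4/a_3=16L^6/3125$, while $b_4/b_3=4^6/3^6=4096/729$, and $L^6<125$ makes this inequality hold with substantial room to spare. Theorem~\ref{TotRealIntervalTheorem} then yields $[\QQ(\theta):\QQ]<3$ for every $\theta\in T_{[\alpha,\beta]}$.

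It would remain to handle the quadratic case. If $\theta\in T_{[\alpha,\beta]}$ has degree $2$ with minimal polynomial $F_\theta(X)=X^2+bX+c\in\ZZ[X]$ and Galois conjugate $\theta'$, then both roots lie in $[\alpha,\beta]$, so
\[
b^2-4c=(\theta-\theta')^2\leq L^2<5.
\]
Thus $b^2-4c$ is a positive integer less than $5$; reduction modulo $4$ shows that any integer of the form $b^2-4c$ is congruent to $0$ or $1\pmod 4$, leaving only $\{1,4\}$, both of which are perfect squares contradicting the irreducibility of $F_\theta$ over $\QQ$. This rules out the quadratic case and completes the proof that $T_{[\alpha,\beta]}\subseteq\ZZ$.

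We do not anticipate a serious obstacle: both numerical inequalities needed for Theorem~\ref{TotRealIntervalTheorem} are satisfied with large margin, and the congruence step is elementary. The value $\sqrt{5}$ should be sharp, as witnessed by the golden ratio $(1+\sqrt{5})/2$, whose two Galois conjugates lie in a real interval of length exactly $\sqrt{5}$; any larger constant would admit this algebraic integer and defeat the corollary.
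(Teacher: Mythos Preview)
Your proposal is correct and follows essentially the same route as the paper: apply Theorem~\ref{TotRealIntervalTheorem} with $n_0=3$ (verifying $a_3<b_3$ and $a_4/a_3<b_4/b_3$ via the same numerical checks), then eliminate the quadratic case by the mod-$4$ constraint on $b^2-4c$, and note the sharpness at $\sqrt{5}$ via the golden ratio. The only cosmetic difference is that the paper phrases the quadratic step as ``$\disc(F)\geq 5$ forces $|\theta-\theta'|\geq\sqrt{5}$'' rather than your contrapositive ``$\disc(F)<5$ forces $\disc(F)\in\{1,4\}$''.
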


This corollary is sharp in the sense that a counterexample exists if $\beta-\alpha=\sqrt{5}$.  The irrational algebraic integer $\theta=\frac{1+\sqrt{5}}{2}$ and its algebraic conjugate $\theta'=\frac{1-\sqrt{5}}{2}$ both lie in the interval $[\theta',\theta]$, which has length $\sqrt{5}$.

\begin{proof}[Proof of Corollary~\ref{TotRealIntervalCor}]
We wish to prove that if $\theta\in T_{[\alpha,\beta]}$, then $\theta\in\QQ$.  That in fact $\theta\in\ZZ$ follows at once since $\theta$ is assumed to be an algebraic integer.  The strategy of proof is to use Theorem~\ref{nDiamIntervalThm} and Theorem~\ref{TotRealIntervalTheorem} to prove that $[\QQ(\theta):\QQ]<3$, and then to use elementary arguments to show that $[\QQ(\theta):\QQ]\neq2$.

We apply Theorem~\ref{TotRealIntervalTheorem}, thus for each $n\geq2$ we set
\begin{equation*}
\begin{split}
a_n & = d_n([\alpha,\beta])^{n(n-1)}=(\beta-\alpha)^{n(n-1)}D_n \\
b_n & = \frac{n^{2n}}{n!^2}
\end{split}
\end{equation*}
using Theorem~\ref{nDiamIntervalThm} to calculate the $n$-diameters of the interval.  In particular
\begin{equation*}
\begin{split}
a_2 & = d_2([\alpha,\beta])^{2(2-1)}=(\beta-\alpha)^{2}D_2=(\beta-\alpha)^{2} \\
a_3 & = d_3([\alpha,\beta])^{3(3-1)}=(\beta-\alpha)^{6}D_3=(\beta-\alpha)^{6}\frac{1}{16} \\
a_4 & = d_4([\alpha,\beta])^{4(4-1)}=(\beta-\alpha)^{12}D_4=(\beta-\alpha)^{12}\frac{1}{3125}
\end{split}
\end{equation*}
and 
\begin{equation*}
\begin{split}
b_2 & = \frac{2^{4}}{2!^2}=4 \\
b_3 & = \frac{3^{6}}{3!^2}=\frac{81}{4} \\
b_4 & = \frac{4^{8}}{4!^2}=\frac{1024}{9} 
\end{split}
\end{equation*}
using $D_2=1$, $D_3=\frac{1}{16}$, and $D_4=\frac{1}{3125}$ as described after the statement of Theorem~\ref{nDiamIntervalThm}.  

We have $a_3<b_3$, as this inequality simplifies to $(\beta-\alpha)^3<18$, which holds because $(\beta-\alpha)^3<(\sqrt{5})^3\approx11.18...$.  We also have $\frac{a_4}{a_3}<\frac{b_4}{b_3}$ because this inequality simplifies to 
\[
(\beta-\alpha)^6<\frac{800000}{729}\approx1097.393...,
\]
which holds because $(\beta-\alpha)^6<\sqrt{5}^6=125$.  We conclude using Theorem~\ref{TotRealIntervalTheorem} that if $\theta\in T_{[\alpha,\beta]}$, then $[\QQ(\theta):\QQ]<3$.

It remains only to prove that $[\QQ(\theta):\QQ]\neq2$ for $\theta\in T_{[\alpha,\beta]}$.  If in fact $[\QQ(\theta):\QQ]=2$ and $\theta$ has minimal polynomial $F(X)=X^2+aX+b\in\ZZ[X]$, then $\disc(F)=a^2-4b$ must be a positive nonsquare, as $F(X)$ is irreducible over $\QQ$ and has two real roots.  But $\disc(F)=a^2-4b=2$ and $\disc(F)=a^2-4b=3$ are both impossible as $2$ and $3$ are not squares modulo $4$.  So $\disc(F)\geq5$.  If $\theta'$ denotes the algebraic conjugate of $\theta$, then $\disc(F)=(\theta-\theta')^2$ and hence $|\theta-\theta'|\geq\sqrt{5}$.  But this contradicts the assumption that both $\theta$ and $\theta'$ are elements of the interval $[\alpha,\beta]$ which has length $<\sqrt{5}$.

We have proved that $T_{[\alpha,\beta]}\subseteq\QQ$ and hence $T_{[\alpha,\beta]}\subseteq\ZZ$ as $T_{[\alpha,\beta]}$ contains only algebraic integers.
\end{proof}


\section{Application to post-critically finite polynomials in unicritical families}\label{ParSpaceSect}

Our goal in this section is to apply Theorem~\ref{nDiamIntervalThm} and Theorem~\ref{TotRealIntervalTheorem} to the proof of Theorem~\ref{MainDynamicalTheorem}, the dynamical application of classifying totally real PCF parameters in unicritical families.  

Although superseded by Theorem~\ref{MainDynamicalTheorem}, we first include the following qualitative finiteness result, because it has a fairly elementary proof and illustrates the main ideas behind the proof of Theorem~\ref{MainDynamicalTheorem} in a simple way.  Recall that $\QQ^\tr$ denotes the maximal totally real subfield of $\Qbar$, and that for each integer $d\geq2$ we denote
\[
\PCF_d=\{c\in\Qbar\mid f(x)=x^d+c\text{ is PCF}\}.
\]

\begin{thm}\label{FinitenessTheorem}
For each $d\geq2$, the set $\PCF_d\cap\QQ^\tr$ is finite.
\end{thm}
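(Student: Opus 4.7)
The plan is to combine three observations: first, that $\PCF_d$ is Galois-stable and consists of algebraic integers; second, that $\Mcal_d\cap\RR$ is a real interval of length less than~$4$; and third, that Theorem~\ref{nDiamIntervalThm} turns these facts into an upper bound on the degree of any $c\in\PCF_d\cap\QQ^\tr$. Finiteness then follows from Northcott's theorem on sets of bounded degree and height.

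First I would verify the two elementary closure properties of $\PCF_d$.  For Galois stability, applying any $\sigma\in\Gal(\Qbar/\QQ)$ to a preperiodicity relation $f^{n+m}(0)=f^n(0)$ for $f(x)=x^d+c$ yields the analogous relation for $x^d+\sigma(c)$, so $\sigma(c)\in\PCF_d$.  For algebraic integrality, this same relation, once expanded, is a monic polynomial equation in $c$ with integer coefficients, so $c$ is an algebraic integer.  Next I would invoke the cited calculations of Paris\'e-Rochon and Paris\'e-Ransford-Rochon to write $\Mcal_d\cap\RR=[\alpha,\beta]$ with length $L=\beta-\alpha<4$, and note $\PCF_d\subseteq\Mcal_d$ since the PCF condition forces the forward orbit of $0$ to be bounded.

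The main step is the degree bound.  Suppose $c\in\PCF_d\cap\QQ^\tr$ has degree $n=[\QQ(c):\QQ]$; Galois stability together with totally realness then places every conjugate of $c$ inside $[\alpha,\beta]$.  Identity (\ref{nDiamAndDisc}) applied to the minimal polynomial $F_c(X)\in\ZZ[X]$, together with Theorem~\ref{nDiamIntervalThm}, yields
\[
|\disc(F_c)|\leq d_n([\alpha,\beta])^{n(n-1)}=L^{n(n-1)}D_n.
\]
Since $F_c$ is irreducible, $|\disc(F_c)|\geq 1$, so $d_n([\alpha,\beta])\geq 1$.  But $d_n([\alpha,\beta])$ decreases to $d_\infty([\alpha,\beta])=L/4<1$, so this inequality must fail for all sufficiently large $n$.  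Hence the degree of any $c\in\PCF_d\cap\QQ^\tr$ is bounded by some $n_0=n_0(L)$.

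To finish, Northcott's theorem yields finiteness once both the degree and the Weil height are bounded.  The height bound comes for free: since $c$ is an algebraic integer the non-archimedean places of $\QQ(c)$ contribute nothing, and by Galois stability every archimedean conjugate of $c$ lies in the bounded set $\Mcal_d\subset\CC$.  The only real obstacle is a psychological one: once the recursion of Theorem~\ref{nDiamIntervalThm} is available, the rest is a soft capacity-theoretic squeeze of $|\disc(F_c)|$ between an integer lower bound and a shrinking upper bound.  The genuinely harder task --- extracting degree bounds sharp enough to actually enumerate $\PCF_d\cap\QQ^\tr$, as will be needed for Theorem~\ref{MainDynamicalTheorem} --- is precisely what the more delicate Theorem~\ref{TotRealIntervalTheorem} is designed to handle.
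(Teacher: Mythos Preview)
Your proof is correct and follows essentially the same capacity-theoretic argument as the paper: both squeeze $|\disc(F_c)|$ between the trivial lower bound $1$ and the upper bound $d_n([\alpha,\beta])^{n(n-1)}$, use that $d_n\to d_\infty=L/4<1$ to force a degree bound, and then appeal to Northcott for finiteness. The only cosmetic differences are that the paper phrases this as a proof by contradiction (assuming an infinite sequence and letting the degrees blow up), and that your invocation of Theorem~\ref{nDiamIntervalThm} is not actually needed at this qualitative stage---only the limit $d_\infty([\alpha,\beta])=L/4$ is used, not the recursion for $D_n$.
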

\begin{proof} 
We begin with the $d=2$ case.  For a parameter $c$, define $f_c(x)=x^2+c$, and note that if $f_c$ is PCF then $0$ is preperiodic and hence $f_c^i(0)=f_c^j(0)$ for some integers $i<j$.  In other words $c$ is a root of the polynomial $G(X)=f_X^j(0)-f_X^i(0)$, which is monic with integral coefficients, and so $c$ must be an algebraic integer.   Let $F_{c}(X)\in\ZZ[X]$ be the monic minimal polynomial of $c$.  Since the parameter $c$ is a root of the monic integral polynomial $f^j_{X}(0)-f^i_{X}(0)$ in $X$ for some $i<j$, it follows that $F_{c}(X)$ is a divisor of $f^j_{X}(0)-f^i_{X}(0)$, and hence all of the roots of $F_{c}(X)$ have the property that $f_c$ is PCF.  It follows from this and the totally real hypothesis that all of the roots of $F_{c}(X)$ are real and also in the Mandelbrot set 
\[
\Mcal_2=\{c\in\CC\mid x^2+c\text{ has bounded critical orbit}\}.
\]
It is well known that $\RR\cap\Mcal_2=[-2,1/4]$.  Since the transfinite diameter of an interval is one quarter of its length (\cite{MR1334766} Cor. 5.2.4), we have $d_\infty(\RR\cap\Mcal_2)<1$ and hence $\RR\cap\Mcal_2$ contains only finitely many complete sets of conjugates of algebraic integers, completing the proof that $\PCF_d\cap\QQ^\tr$ is finite.

Because it gives a simple illustration of the ideas used in the proof of Theorem~\ref{MainDynamicalTheorem}, we can quickly describe the proof of Fekete's theorem in this special case.  Suppose on the contrary that $\{c_m\}$ is an infinite sequence of distinct points in $\QQ^{\mathrm{tr}}$ such that $f_{c_m}$ is PCF for all $m\geq1$.  Since the $c_m$ are algebraic integers and all of the complex embeddings of each $c_m$ are in $[-2,1/4]$, the sequence $\{c_m\}$ has bounded height and so by Northcott's property, we know that $n_m:=[\QQ(c_m): \QQ]\rightarrow  \infty$ as $m\rightarrow +\infty$.  Since $c_m$ is an algebraic integer, $\disc(F_m)$ is a nonzero rational integer.  Therefore
\begin{align*}
1 \leq |\text{disc}(F_m)|^{1/n_m(n_m-1)} \leq d_{n_m}([-2,1/4])\to d_\infty([-2,1/4])=9/16,
\end{align*} 
a contradiction as $m\to+\infty$.  

For $d\geq3$ the same proof works, using properties of the degree $d$ analogue 
\[
\Mcal_d=\{c\in\CC\mid x^d+c\text{ has bounded critical orbit}\}
\]
of the Mandelbrot set.  It follows from work of Paris\'e-Rochon \cite{MR3654403} and Paris\'e-Ransford-Rochon \cite{PariseRansfordRochon} that for all $d\geq3$, the set $\Mcal_d\cap\RR$ is an interval of length less than $4$.  Thus $d_\infty(\Mcal_d\cap\RR)<1$ for $d\geq3$ and the same contradiction is obtained.
\end{proof}

\begin{proof}[Proof of Theorem~\ref{MainDynamicalTheorem} in the case $d=2$] We seek to prove that
\begin{equation}\label{MainDynamicalCalculation1}
\begin{split}
\PCF_2\cap\QQ^\tr & = \{-2,-1,0\};
\end{split}
\end{equation}  
that is, that the only totally real $c$ for which $x^2+c$ is PCF are $c=-2,-1,0$.

Suppose that $c\in\Qbar$ is totally real and that $x^2+c$ is PCF.  In particular, $c$ must be an algebraic integer (as explained in the proof of Theorem~\ref{FinitenessTheorem}).  Since all of the $\Gal(\Qbar/\QQ)$-conjugates of $c$ in $\Qbar$ are also PCF and are real when embedded into $\CC$, the minimal polynomial of $c$ over $\QQ$ has all roots in $\Mcal_2\cap\RR$, where $\Mcal_2$ is the ordinary Mandelbrot set.  Recall that $\Mcal_2\cap\RR=[-2,1/4]$.

We now proceed in an argument similar the proof of Corollary~\ref{TotRealIntervalCor}, following these steps:

\begin{itemize}
\item[]Step 1: We use Theorem~\ref{nDiamIntervalThm} and Theorem~\ref{TotRealIntervalTheorem} to prove that $[\QQ(c):\QQ]<3$.
\item[]Step 2: We use elementary arguments to show that $[\QQ(c):\QQ]\neq2$.
\item[]Step 3: We recall a well-known argument to conclude that $c\in\{-2,-1,0\}$.
\end{itemize}

\underline{Step 1:} We apply Theorem~\ref{TotRealIntervalTheorem} with $L=9/4$, the length of the interval $[-2,1/4]$.  For each $n\geq2$ set
\begin{equation*}
\begin{split}
a_n & = d_n([-2,1/4])^{n(n-1)}=(9/4)^{n(n-1)}D_n \\
b_n & = \frac{n^{2n}}{n!^2},
\end{split}
\end{equation*}
where the sequence $\{D_n\}$ is defined in Theorem~\ref{nDiamIntervalThm}.  We calculate
\begin{equation*}
\begin{split}
a_2 & = d_2([-2,1/4])^{2(2-1)}=(9/4)^{2}D_2=\frac{81}{16}=5.0625 \\
a_3 & = d_3([-2,1/4])^{3(3-1)}=(9/4)^{6}D_3=\frac{531441}{65536}\approx 8.109... \\
a_4 & = d_4([-2,1/4])^{4(4-1)}=(9/4)^{12}D_4=\frac{282429536481}{52428800000}\approx 5.386... \\
\end{split}
\end{equation*}
and 
\begin{equation*}
\begin{split}
b_2 & = \frac{2^{4}}{2!^2}=4 \\
b_3 & = \frac{3^{6}}{3!^2}=\frac{81}{4}=20.25 \\
b_4 & = \frac{4^{8}}{4!^2}=\frac{1024}{9}\approx113.777...
\end{split}
\end{equation*}
using $D_2=1$, $D_3=\frac{1}{16}$, and $D_4=\frac{1}{3125}$ as described after the statement of Theorem~\ref{nDiamIntervalThm}.  Moreover, using these calculations we have 
\begin{equation*}
\begin{split}
\frac{a_4}{a_3} & =\frac{282429536481/52428800000}{531441/65536}=\frac{531441}{800000}\approx0.664... \\
\frac{b_4}{b_3} & =\frac{1024/9}{81/4}=\frac{4096}{729}\approx5.618...
\end{split}
\end{equation*}
and thus $\frac{a_4}{a_3}<\frac{b_4}{b_3}$.  We conclude using Theorem~\ref{TotRealIntervalTheorem} with $n_0=3$ that $[\QQ(c):\QQ]<3$.  

\underline{Step 2:}  Assume that $[\QQ(c):\QQ]=2$.  Let $F(X)=X^2+aX+b\in\ZZ[X]$ be the minimal polynomial of $c$ over $\QQ$, thus $a,b\in\ZZ$ since $c$ is an algebraic integer. Since $c$ is totally real and $x^2+c$ is PCF, we know that both complex roots $c_1,c_2$ of $F(X)$ are in the real interval $\Mcal_2\cap\RR=[-2,1/4]$.  Thus 
\[
0<\disc(F)=a^2-4b=(c_1-c_2)^2\leq(9/4)^2=5.0625.
\]
Moreover $\disc(F)$ is not a square since $F(X)$ is irreducible, so $\disc(F)$ is either $2$, $3$, or $5$.  We cannot have $\disc(F)=2$ or $\disc(F)=3$ because $\disc(F)=a^2-4b$ and neither $2$ nor $3$ is a square modulo $4$.  

So we must have $\disc(F)=a^2-4b=5$; in particular $a$ must be odd.  Since both $c_1$ and $c_2$ are in the interval $[-2,1/4]$, we have $a=-(c_1+c_2)\in[-1/2,4]$, so either $a=1$ or $a=3$.  If $a=1$ then $b=-1$ and $F(X)=X^2+X-1$; but one of the roots of this polynomial, $c_1=\frac{-1+\sqrt{5}}{2}\approx0.618...$ is not contained in $[-2,1/4]$, which gives a contradiction.  If $a=3$ then $b=1$ and $F(X)=X^2+3X+1$; but one of the roots of this polynomial, $c_1=\frac{-3-\sqrt{5}}{2}\approx-2.618...$ is not contained in $[-2,1/4]$, which again gives a contradiction, completing the proof that $[\QQ(c):\QQ]\neq2$.

\underline{Step 3:} We now know that $[\QQ(c):\QQ]=1$ and hence $c\in \QQ$.  Since $c$ is an algebraic integer and hence a rational integer, and $c\in\Mcal_2\cap\RR=[-2,1/4]$, we conclude that $c\in\{-2,-1,0\}$.  It is elementary to check that all three of $x^2-2$, and $x^2-1$, and $x^2$ are PCF, concluding the proof of (\ref{MainDynamicalCalculation1}).
\end{proof}

To prove Theorem~\ref{MainDynamicalTheorem} in the case $d\geq3$, we need to understand the intersection $\Mcal_d\cap\RR$ of the degree $d$ generalized Mandelbrot set $\Mcal_d$ with the real line.  It has been shown by Paris\'e-Rochon \cite{MR3654403} when $d\geq3$ is odd, and by Paris\'e-Ransford-Rochon \cite{PariseRansfordRochon} when $d\geq4$ is even, that
\begin{equation}\label{HigherDegreeIntervalDef}
\Mcal_d\cap\RR = 
\begin{cases}
[-a_d,a_d] & \text{ when $d\geq3$ is odd} \\
[-b_d,a_d] & \text{ when $d\geq4$ is even},
\end{cases}
\end{equation}  
where $a_d=(d-1)/(d^{d/(d-1)})$ and $b_d=2^{1/(d-1)}$. 

\begin{lem}\label{MultiLengthLem}
For each $d\geq3$, the interval $\Mcal_d\cap\RR$ has length less than $\sqrt5$.
\end{lem}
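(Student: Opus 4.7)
The plan is to apply the explicit description of $\Mcal_d\cap\RR$ in (\ref{HigherDegreeIntervalDef}) and split on the parity of $d$.

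For odd $d\geq 3$, the length is $2a_d$ where $a_d=(d-1)/d^{d/(d-1)}$.  Writing $d^{d/(d-1)}=d\cdot d^{1/(d-1)}>d>d-1$ shows that $a_d<1$, so the length is strictly less than $2$, hence less than $\sqrt5$.  This case will be essentially immediate.

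For even $d\geq 4$ the length is $a_d+b_d$, and the situation is more delicate because both $a_d$ and $b_d=2^{1/(d-1)}$ tend to $1$ as $d\to\infty$.  The uniform bound $a_d<1$ together with $b_d\leq 2^{1/3}$ (attained at $d=4$) only yields $a_d+b_d<1+2^{1/3}\approx 2.260$, which narrowly exceeds $\sqrt5\approx 2.236$; this forces me to split the even case further.  For $d\geq 6$ even, monotonicity of $b_d$ in $d$ gives $b_d\leq 2^{1/5}$, so $a_d+b_d<1+2^{1/5}$, and I will verify $1+2^{1/5}<\sqrt5$ by raising to the fifth power, obtaining $2<(\sqrt5-1)^5=80\sqrt5-176$, which in turn reduces to the integer inequality $8000>7921$.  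For $d=4$, the identity $4^{4/3}=2^{8/3}$ yields the closed form $a_4+b_4=\frac{11}{8}\cdot 2^{1/3}$; squaring and using $2^{2/3}<2$ then gives $(a_4+b_4)^2<\frac{242}{64}<5$.

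The main obstacle is precisely this tight gap at $d=4$, where the natural uniform bound $1+2^{1/3}$ is slightly too weak, requiring a separate direct computation for that one value of $d$.  Once this isolated case is dispatched, both the remaining even cases $d\geq 6$ and the entire odd case are routine verifications, and the lemma follows.
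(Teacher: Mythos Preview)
Your proof is correct and follows essentially the same approach as the paper: both arguments show $a_d<1$, conclude the odd case immediately from $2a_d<2<\sqrt5$, and split the even case into $d=4$ (handled individually) and $d\geq 6$ (handled via $a_d+b_d<1+2^{1/5}<\sqrt5$). The only difference is cosmetic: where the paper cites decimal approximations for $a_4+b_4$ and $1+2^{1/5}$, you supply exact algebraic verifications (the closed form $a_4+b_4=\tfrac{11}{8}\cdot 2^{1/3}$ and the reduction to $8000>7921$), which is a nice touch but not a different route.
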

\begin{proof}
First, we have
\[
a_d=\frac{d-1}{d^{d/(d-1)}}<1
\]
for all $d\geq2$.  Indeed, this is algebraically equivalent to the inequality 
\[
(d-1)\log(d-1)<d\log d,
\] 
which follows from the fact that $x\mapsto x\log x$ is increasing for $x\geq1$.  We conclude that when $d\geq3$ is odd, the interval $\Mcal_d\cap\RR=[-a_d,a_d]$ has length $<2<\sqrt{5}$.

Now consider the case  $d\geq4$ even. We want to show that $a_d+b_d<\sqrt{5}$.  We can just check numerically that $a_4+b_4=3/(4^{4/3}) + 2^{1/3}\approx 1.732...<\sqrt{5}$, while for $d\geq6$ (even) we have $d-1\geq5$ and so 
\[
a_d+b_d=a_d + 2^{1/(d-1)} \leq a_d + 2^{1/5} < 1+2^{1/5}\approx 2.148<\sqrt{5}.
\]
\end{proof}

\begin{proof}[Proof of Theorem~\ref{MainDynamicalTheorem} in the case $d\geq3$] We seek to prove that
\begin{equation}\label{MainDynamicalCalculation2}
\begin{split}
\PCF_d\cap\QQ^\tr & =\{-1,0\} \text{ if $d\geq4$ is even} \\
\PCF_d\cap\QQ^\tr & =\{0\} \text{ if $d\geq3$ is odd};
\end{split}
\end{equation}
that is, for $d\geq3$, the only PCF unicritical maps $x^d+c$ for $c\in\QQ^\tr$ are $x^d-1$ and $x^d$ when $d\geq4$ is even, and $x^d$ when $d\geq3$ is odd.  

Suppose that $c\in \PCF_d\cap\QQ^\tr$.  By the same argument described in the degree $d=2$ case, $c$ must be an algebraic integer, and all of the algebraic conjugates of $c$ in $\CC$ lie in the real interval $\Mcal_d\cap\RR$.  Since these intervals have length $<\sqrt5$ for all $d\geq3$ by Lemma~\ref{MultiLengthLem}, it follows from Corollary~\ref{TotRealIntervalCor} that $c\in\ZZ$.  Recalling that $a_d<1$ and $b_d=2^{1/(d-1)}$, when $d\geq3$ is odd the only integer in $\Mcal_d\cap\RR=[-a_d,a_d]$ is $c=0$, and when $d\geq4$ is even the only integers in $\Mcal_d\cap\RR=[-b_d,a_d]$ are $c=-1,0$, completing the proof.
\end{proof}



\end{document}